\documentclass[psamsfonts]{amsart}

\usepackage{amssymb,amsfonts}
\usepackage{amsmath}
\usepackage[all,arc]{xy}
\usepackage{enumerate}
\usepackage{mathrsfs}
\usepackage{mathtools}
\usepackage{cite}
\usepackage[unicode]{hyperref}
\usepackage{bookmark}

\newtheorem{thm}{Theorem}[section]
\newtheorem{theorem}[thm]{Theorem}
\newtheorem{cor}[thm]{Corollary}
\newtheorem{prop}[thm]{Proposition}
\newtheorem{lemma}[thm]{Lemma}

\newtheorem{quest}[thm]{Question}
\newtheorem{claim}[thm]{Claim}

\theoremstyle{definition}
\newtheorem{defn}[thm]{Definition}

\newtheorem{exmp}[thm]{Example}

\newtheorem{rem}[thm]{Remark}

\newcommand{\la}{\lambda}
\newcommand{\ka}{\kappa}
\newcommand{\w}{\omega}
\newcommand{\sm}{\setminus}
\newcommand{\mc}{\mathcal}

\DeclareMathOperator{\Lev}{Lev}

\DeclareMathOperator{\ot}{ot}

\DeclareMathOperator{\fd}{fd}
\DeclareMathOperator{\Pred}{Pred}
\DeclareMathOperator{\hgt}{ht}
\DeclareMathOperator{\width}{width}

\DeclareTextCommand{\textaleph}{L8U}{ℵ}

\newcommand{\uhr}{\upharpoonright}

\makeatletter
\let\c@equation\c@thm
\makeatother
\numberwithin{equation}{section}

\title{A Note on Narrow Systems}
\author{William Adkisson}
\begin{document}

\begin{abstract}
 We examine the relationship between trees and narrow systems. In particular, we prove that if a tree of height $\ka$ admits a narrow subsystem, it must contain a well-pruned subtree of height $\ka$. We extend these results to two-cardinal trees.
\end{abstract}

\maketitle
\section{Introduction}

The tree property, an uncountable generalization of K\"onig's Lemma, states that all trees of height $\ka$ with levels of size $<\ka$ have an unbounded branch. The tree property is closely tied to large cardinals: an inaccessible cardinal $\ka$ is weakly compact if and only if it has the tree property. Moreover, while the tree property can hold at smaller cardinals, the consistency strength of the tree property at non-inaccessibles is at least a weakly compact.

A generalization of the tree property, the strong tree property, has a similar relationship with strongly compact cardinals: Jech \cite{jech:combinatorialprobs} proved that an inaccessible cardinal $\ka$ is strongly compact if and only if it has the strong tree property. While the strong tree property can hold at small cardinals, the only known methods for forcing it require (at least) a strongly compact cardinal.

A famous theorem of Solovay \cite{solovay:sch} states that the Singular Cardinal Hypothesis (SCH) always holds above a strongly compact cardinal. Since the strong tree property is so closely tied to strongly compacts, a natural question is the following:

\begin{quest}\label{q:doesTPimplySCH}
	Is it consistent for SCH to fail above a cardinal with the strong tree property?
\end{quest}

There have been many results proved over the years, refining exactly how much large cardinal strength is required to obtain SCH. Viale \cite{Viale:PFA} proved that the Proper Forcing Axiom implies SCH above $\aleph_2$, and PFA implies that the strong tree property holds at $\aleph_2$. Work of Krueger \cite{krueger:guessing} and Viale\cite{viale:guessing} implies that if ISP, a powerful strengthing of the strong tree property linked to supercompact cardinals, holds at a cardinal $\ka$, SCH must hold above $\ka$.

Recently, Lambie-Hanson\cite{lambie-hanson:narrowsystems} proved that if a certain two-cardinal narrow system property holds at $\ka$, then SCH (and in fact Shelah's Strong Hypothesis, a technical strengthening of SCH) must hold above $\ka$. This narrow system property holds in all known models of the strong tree property, and in some cases is used as a crucial ingredient in the proof that the strong tree property holds. Moreover, it is usually easier to obtain, and holds in some models where the strong tree property fails. This is very strong evidence towards a positive answer to Question \ref{q:doesTPimplySCH}. It is open if the strong tree property implies this narrow system property; if it does, then Lambie-Hanson's result would answer Question \ref{q:doesTPimplySCH} immediately.

Lambie-Hanson's argument involves constructing a specific concrete systems that, if they contain branches, mandate SCH to hold. We will show that these systems can be used to generate certain $(\ka,\la)$-trees in a canonical way. A branch through the generated tree does not necessarily induce a branch through the original system, but it does imply some amount of structure, and this is enough to finish Lambie-Hanson's argument.

Although this allows us to move from systems to the trees that they generate, the generated $(\ka,\la)$-trees are not necessarily thin, so the strong tree property may not imply that they have a branch. This motivates the following question:

\begin{quest}
	Assuming the strong tree property, do all $(\ka,\la)$-trees generated from narrow concrete $(\ka,\la)$-systems have a cofinal branch?
\end{quest}

A positive answer to this question would imply a positive answer to Question \ref{q:doesTPimplySCH}. There are a number of ways to modify the question that would still give interesting information. The system used in Lambie-Hanson's argument has a very specific format, closely related to subadditive colorings; we can restrict our question to only these systems. We could also increase our assumption to strengthenings of the strong tree property like the super tree property, since Question \ref{q:doesTPimplySCH} is also open for that principle.

We are also interested in a much more general version of this question:
\begin{quest}
	Which trees admit narrow subsystems?
\end{quest}
Understanding the answer to this question has implications beyond studying SCH. All known methods for obtaining the tree property and the strong tree property at the successor of a singular cardinal use a two-step process, first finding a narrow subsystem of a tree, and then obtaining a branch through that system. The process of finding the narrow system leads to most of the difficulties of the proof and requires many large cardinals; a more streamlined way of detecting narrow systems could resolve some of the major open questions related to the (strong) tree property at successors of singulars.

In this paper, we begin to examine the relationship between trees (both standard trees and two-cardinal trees) and narrow systems. In particular, we show that if a tree $T$ admits a narrow subsystem, it must have a well-pruned subtree, and generalize this result to $(\ka,\la)$-trees.

\section{Basic Definitions: Trees and Systems}

\subsection{One-cardinal objects}
\begin{defn}
	A partial order $(T, \leq_T)$ is a \emph{tree} if it has a minimal element (known as the root), and for all $y\in T$, the set $\Pred(y) = \{x\in T \mid x\leq_T y\}$ is well-ordered.
	
	The \emph{height} of an element $x\in T$ is $\hgt(x) = \ot(\Pred(x))$. The \emph{$\alpha$-th level} of $T$ is $T_\alpha = \{x\in T \mid \hgt(x) = \alpha\}$. The \emph{height} of a tree $T$ is $\sup_{x\in T}(\hgt(x)+1)$.
	
	We say that a tree of height $\kappa$ is \emph{thin} if all levels have size $<\ka$.
	
	The \emph{cone} above a node $x$ is $C(x) = \{y\in T \mid x\leq_T y\}$. If $x$ appears in multiple trees, we will sometimes write this as $C_T(x)$ to clarify which tree we are drawing elements from.
	
	We say that a cone $C$ above a node $x \in T_\alpha$ is \emph{unbounded} if $C(x)\cap T_\beta$ is nonempty for all $\beta >\alpha$.
\end{defn}

\begin{defn}
	Let $I \subseteq \ka$ be unbounded. A sequence $S = \langle S_\alpha \mid \alpha \in I \rangle$, equipped with a partial order $\leq_S$ on $\bigcup_{\alpha\in I} S_\alpha$, is a \emph{system of one relation} with height $\ka$ and index set $I$ if the following hold:
	\begin{enumerate}
		\item Let $\alpha,\beta \in I$. Let $x \in S_\alpha$ and $y \in S_\beta$. If $x \leq_S y$, then either $\alpha <\beta$ or $\alpha = \beta$ and $x=y$.
		\item Let $\alpha_0$ and $\alpha_1$ both be below $\beta$. Then $x_0 \in S_{\alpha_0}$, $x_1 \in S_{\alpha_1}$ and $y\in S_\beta$, $x_0$ and $x_1$ are comparable.
		\item For all $\alpha < \beta$ both in $I$, there exists $x \in S_\alpha$ and $y\in S_\beta$ such that $x \leq_S y$.
	\end{enumerate}

	We will abuse notation and use $S$ to refer to both the system itself and the set $\bigcup_{\alpha\in I} S_\alpha$.
	
	The \emph{width} of a system is $\width(S) = \sup_{\alpha\in I}(|S_\alpha|)$. A system is \emph{thin} if $|S_\alpha|<\ka$ for all $\alpha \in I$, and \emph{narrow} if $\width(S)^+ < \ka$.
	
	A \emph{cofinal branch} through $S$ is a set $b = \{x_\alpha \mid \alpha \in I\}$ such that each $x_\alpha \in S_\alpha$, and for all $\alpha < \beta$ both in $I$, $x_\alpha \leq_S x_\beta$.
	
	The \emph{narrow system property} holds at $\ka$ if every narrow system of height $\ka$ with one relation has a cofinal branch.
\end{defn}

	\begin{rem}
	For both systems and trees, by enumerating the levels $T_\alpha$ and $S_\alpha$, we can assume that the $\zeta$-th element of the $\alpha$-th level is of the form $(\alpha, \zeta)$.
\end{rem}

As the name would suggest, systems with one relation are a special case of a more general notion, systems of relations, due to Magidor and Shelah \cite{MS_TPSuccSing}. When examining the narrow system property, it suffices to restrict to a single relation; see \cite[Remark 28]{lambie-hanson:squares} for details.

A system can be thought of as a tree after some of its have been forgotten. Given any two levels, there needs to be \textit{some} node on the upper level above some element of the lower level, but the system may not know about any nodes in between the two on intermediate levels. In a tree, on the other hand, any pair of levels must have \emph{every} node on the upper level must be above some node on the lower level.

\begin{defn}
	Let $(T, \leq_T)$ be a tree of height $\ka$. We say that a system $S$ is a subsystem of $T$ if each $S_\alpha \subseteq T_\alpha$, and $\leq_S = \leq_T\uhr \bigcup_{\alpha < \ka} S_\alpha$.
\end{defn}

We will be particularly interested in the following kind of system.

\begin{defn}
	Let $\ka$ be regular. A \emph{concrete $\ka$-system} is a sequence $S = \langle S_\alpha \mid \alpha \in I\rangle$ such that:
	\begin{enumerate}
		\item $I$ is a cofinal subset of $\ka$.
		\item For all $\alpha \in I$, $S_\alpha$ is a nonempty collection of subsets of $\alpha$.
		\item For all $\alpha \leq \beta$ both in $I$, there is $y\in S_\beta$ such that $y\cap \alpha \in S_\alpha$.
	\end{enumerate}
\end{defn}
A concrete $\ka$-system can be viewed as a system of one relation with height $\ka$, whose elements are pairs $(\alpha, x)$ for $x\in S_\alpha$. The relation $\leq_S$ is defined in the following way: if $(\alpha,x)\in S_\alpha$ and $(\beta,y)\in S_\beta$, $(\alpha,x)\leq_S (\beta,y)$ iff $\alpha \leq \beta$ and $y\cap \alpha = x$.

Because concrete systems are so, well, concrete, they allow us to generate a tree in a canonical way.

\begin{defn}
	Let $S$ be a concrete $\ka$-system. The \emph{tree generated from $S$} is the tree $T$ whose levels are defined in the following way. The $\alpha$-th level of $T$ consists of $\{(\alpha,x\cap \alpha) \mid x\in S_\beta \text{ for some } \beta \geq \alpha\}$. If $(\alpha, x) \in T_\alpha$ and $(\beta, y) \in T_\beta$, then $(\alpha,x)\leq_T (\beta,y)$ if and only if $\beta \geq \alpha$ and $x = y\cap \alpha$.
\end{defn}
This tree is essentially just the downwards closure of the system. If a system is a tree after we forget some of its nodes, the generated tree restores the forgotten nodes. For concrete systems, unlike more general systems, this can be done canonically.

This lets us ask a slightly more specific question:
\begin{quest}
	Which trees (up to isomorphism) can be generated by narrow concrete $\ka$-systems?
\end{quest}

This question is made nontrivial by the fact that even if a concrete system is narrow, its generated tree may not be thin.

\begin{exmp}
	Partition $\ka$ into two unbounded sets, $A_0$ and $A_1$. Let $I = \{\alpha+1 \mid \alpha \in A_1\}$. Further partition $A_1$ into $\ka$-many unbounded sets $B_\alpha$. Let $x^0_{\alpha+1} = A_0\cap (\alpha+1)$. Let $x^1_{\alpha+1} = B_\beta\cap (\alpha+1)$, where $\beta$ is the unique ordinal such that $\alpha \in B_\beta$.
	
	We define a concrete $\ka$-system $S = \langle \{x^0_\alpha, x^1_\alpha\} \mid \alpha \in I\rangle$. This is a concrete system with width $2$, since the coherence requirement is satisfied by the $x_\alpha^0$'s.
	
	Despite $S$ having such small width, the tree it generates is not thin. The tree generated from $S$ consists of one branch of height $\ka$ whose elements are initial segments of $A_0$, along with a branch of height $\alpha$ extending from the root for each $\alpha$ in the index set $I$ whose elements are initial segments of the $B_{\beta}$'s. Each level has size $\ka$, even though the original system only had width $2$.
\end{exmp}

\subsection{Two-cardinal objects}
Now we turn our attention to more general objects, whose levels are indexed by elements of $\mc{P}_\ka(\la)$ rather than by $\ka$.

\begin{defn}
	Let $\ka$ be a regular uncountable cardinal, with $\ka \leq \la$. A \emph{$(\ka, \la)$-tree} is a sequence $T = \langle T_z \mid z\in \mc{P}_\ka(\la)\rangle$ with the following properties:
	\begin{itemize}
		\item For all $z \in \mc{P}_\ka(\la)$, $T_z$ is a nonempty collection of subsets of $z$.
		\item If $z \subseteq y$ are both in $\mc{P}_\ka(\la)$, then for all $x \in T_y$, $x\cap z \in T_z$.
	\end{itemize}
	The entries $T_z$ are called the \emph{levels} of $T$. There is a natural tree-like order $\leq_T$ given by this structure: if $z \subseteq z'$ and $x\in T_z$, $y\in T_{z'}$, then $x \leq_T y$ iff $y\cap z = x$.
	
	A $(\ka,\la)$-tree is \emph{thin} if $|T_z|<\ka$ for all $z\in \mc{P}_\ka(\la)$.
	
	A \emph{cofinal branch} through $T$ is a subset $b\subseteq \la$ such that $b\cap z \in T_z$ for all $z\in \mc{P}_\ka(\la)$.
	
	The \emph{strong tree property}\footnote{Many papers on the strong tree property, including several written by the author, define it in terms of different objects, called thin $\mc{P}_\ka(\la)$ lists, rather than thin $(\ka,\la)$-trees. The two definitions are equivalent.} holds at $\ka$ if for all $\la \geq \ka$ and all thin $(\ka,\la)$-trees $T$, there is a cofinal branch through $T$.
\end{defn}

\begin{defn}\cite[Definition 2.1]{lambie-hanson:narrowsystems}
	Let $\ka$ be a regular uncountable cardinal, and let $\la \geq \ka$. A concrete $\mc{P}(\ka,\la)$-system is a sequence $S = \langle S_z\mid z\in I\rangle$ such that:
	\begin{enumerate}
		\item $I$ is a $\subseteq$-cofinal subset of $\mc{P}_\ka(\la)$.
		\item For all $z\in I$, $S_z$ is a nonempty collection of subsets of $\mc{P}_\ka(\la)$.
		\item For all $z \subseteq z'$ both in $I$, there is $y \in S_{z'}$ such that $y\cap z \in S_z$.
	\end{enumerate}
	
	The \emph{width} of $S$ is $\width(S) = \sup_{x\in I} |S_x|$. $S$ is \emph{narrow} if $\width(S)^+<\ka$. A \emph{cofinal branch} through $S$ is a set $b\subseteq \la$ such that $\{b\cap z \in S_z\}$ is $\subseteq$-unbounded in $\mc{P}_\ka(\la)$.
\end{defn}

As before, we can define the $(\ka,\la)$-tree generated from a concrete $\mc{P}_\ka(\la)$-system. Once again, this is essentially just taking the downwards closure of the system.

\begin{defn}
	Suppose $S = \langle S_z \mid z \in I\rangle$ is a concrete $\mc{P}_\ka(\la)$-system. We define the \emph{$(\ka,\la)$-tree $T$ generated from $S$} as follows. For all $x\in \mc{P}_\ka(\la)$, we set $T_x = \{y\cap x \mid \exists z\in I, z\supseteq x \text{ and } y\in S_z\}$.
\end{defn}

Note that if a concrete $\mc{P}_\ka(\la)$-system $S$ has a branch $b$, the downwards closure of $b$ will give a branch through the generated tree. The opposite may not be true: a branch through the generated tree does not necessarily give rise to a branch in the original system, since the nodes in the branch of a generated tree may not appear in the system itself.

Nonetheless, the existence of a branch through the generated tree has implications for the structure of the underlying system. As an example, we record the following slight strengthening of \cite[Theorem A]{lambie-hanson:narrowsystems}; it is implicit in the proof of that theorem.
\begin{theorem}\label{thm:LH}
	Let $\ka$ be regular. Suppose that for all $\la \geq \ka$ and all concrete narrow $\mc{P}_\ka(\la)$-systems $S$, the $(\ka,\la)$-tree generated by $S$ has a cofinal branch. Then SCH holds above $\ka$.
\end{theorem}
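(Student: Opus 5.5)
The plan is to go through the proof of \cite[Theorem A]{lambie-hanson:narrowsystems} and show that the one place where a cofinal branch through the narrow system is used can instead be served by a cofinal branch through the generated $(\ka,\la)$-tree. The overall structure stays as in that paper. By Solovay's argument, it suffices to show that for every regular $\la \geq \ka$ (and for singular $\mu>\ka$ of cofinality $<\ka$, working at $\la = \mu^+$), the relevant covering number of $\mc{P}_\ka(\la)$ is as small as SCH demands. Equivalently, one must exclude the configuration produced by the failure of SCH (or SSH). From such a failure one fixes a combinatorial object: a subadditive, strongly unbounded coloring $c$ derived from a scale or from a witness to a large covering number. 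Out of it one builds a concrete $\mc{P}_\ka(\la)$-system $S$ of width $<\ka$, with $\width(S)^+<\ka$, essentially of cofinality size. Each $S_z$ consists of sets of the form $\{\eta \in z \mid c(\eta, \delta) \leq i\}$, for $\delta$ a suitable point above $z$ and $i$ ranging over a small index set. I would take this construction verbatim, checking only that the output is concrete and narrow, which is shown there.

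Next, I apply the hypothesis to get a cofinal branch $b\subseteq \la$ through the tree $T$ generated by $S$. Then for every $x\in\mc{P}_\ka(\la)$ there are $z\supseteq x$ in $I$ and $y\in S_z$ with $b\cap x = y\cap x$. The key step is to isolate what the remainder of Lambie-Hanson's argument actually uses about the branch. I expect it to use only the following: for each $x$ in a $\subseteq$-cofinal (indeed arbitrary) family, $b\cap x$ coincides with $\{\eta\in x \mid c(\eta,\delta)\leq i\}$ for some $\delta$ and some $i$ from the small index set. The branch is then used to derive that $b$ is unbounded in $\la$ yet has all its points colored boundedly. Subadditivity transfers the bound between pairs of points of $b$, contradicting strong unboundedness of $c$.

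This local information is exactly what the generated tree provides. The witnessing $\delta$ and $i$ may vary with $x$, but they did already in the original argument. Since the index set for $i$ has size $<\ka$ and $\ka$ is regular, a pressing-down or pigeonhole argument over a cofinal family of $x$'s fixes a single $i$. That fixed $i$ is then used on pairs $\eta<\eta'$ in $b$: choose $x\ni \eta,\eta'$, obtain a common $\delta$ with $c(\eta,\delta),c(\eta',\delta)\leq i$, and conclude $c(\eta,\eta')\leq i$ by subadditivity. Hence $c$ is bounded on $[b]^2$ for an unbounded $b$, which is the contradiction.

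The main obstacle is verifying that nowhere else does the original proof need $b\cap z$ to lie in $S_z$ itself, rather than only in the downward closure. In particular, it must not need $\delta$ to be tied to the level $z$, for example with $\delta=\sup z$. I would handle this by recording in the construction that each element of $S_{z'}$ comes with its own parameter $\delta\geq\sup z'$. Restricting such an element to $x\subseteq z'$ keeps a valid parameter, and subadditivity arguments only require $\delta$ to be above the points being compared.
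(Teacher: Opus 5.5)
Your strategy (reread Lambie-Hanson's proof and check that only the downward closure of the branch is used) is the one the paper intends; the paper gives nothing beyond the remark that the result is implicit there. But your sketch never proves the step where a branch of the generated tree is actually weaker than a branch of the system: that $b$ is unbounded, or at least large. You assert it, yet the only information you extract from a tree branch is local. For each $x$ there are $\delta$ above $x$ and $i$ with $b\cap x=\{\eta\in x \mid c(\eta,\delta)\le i\}$. After pigeonholing this gives $c\uhr[b]^2\le i$, but it says nothing about the size of $b$. If, say, $c$ never takes the least admissible value $i_0$, then $b=\emptyset$ satisfies the condition at every $x$. With the system exactly as you describe it, $\emptyset$ would then even be a branch of the system itself. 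So whatever in Lambie-Hanson's construction secures largeness has to be identified and shown to survive restriction.

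That is exactly what tends to fail. Largeness of a system branch must come from the shape of the members of $S_z$, for example that they contain a designated point of $z$ or are cofinal in $\sup z$. Such properties are lost when passing to $y\cap x$ with $x\subsetneq z$, and these restrictions are precisely the new nodes of the generated tree. For instance, let $I$ consist of the $z$ having a maximum and let $S_z=\{\{\eta\in z\sm\{\max z\} \mid c(\eta,\max z)\le i\}\cup\{\max z\} \mid i<\theta\}$. This is a narrow concrete system. Every branch of it contains $\max z$ for cofinally many $z$, hence is unbounded, so for unbounded $c$ your subadditivity argument shows it has no branch. But if $c$ omits the value $0$, then $\emptyset\in T_x$ for every $x$: take $z=x\cup\{\gamma\}$ with $\gamma>\sup x$ and $i=0$. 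So the generated tree has a cofinal branch. The issue you single out as the main obstacle, decoupling $\delta$ from the level, is harmless; the real work is showing that branches of the generated tree are large, and the proposal does not do it.

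One way to close the gap is to use a system whose nodes keep full information under restriction.
\begin{itemize}
\item Fix a bijection $\pi:\lambda\times\theta\to\lambda$ and let $I=\{\pi[a\times\theta] \mid a\in\mc{P}_\ka(\la)\}$.
\item For $z=\pi[a\times\theta]$, put $\delta_z=\sup\{\eta+1 \mid \eta\in a\}$, and let $S_z$ consist of the sets $\pi[\{(\eta,\max(c(\eta,\delta_z),j)) \mid \eta\in a\}]$ for $j<\theta$.
\item Subadditivity gives coherence for $z\subseteq z'$: when $\delta_z<\delta_{z'}$, use any $j\ge c(\delta_z,\delta_{z'})$.
\item Let $b$ be a branch of the generated tree. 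The levels $\pi[\{\eta\}\times\theta]$ force $b$ to code a total function $F:\lambda\to\theta$.
\item The levels $\pi[\{\eta,\eta'\}\times\theta]$ show that for $\eta<\eta'$ there are $\delta>\eta'$ and $j$ with $F(\eta)=\max(c(\eta,\delta),j)$ and $F(\eta')=\max(c(\eta',\delta),j)$.
\item Choose $k$ with $|F^{-1}\{k\}|=\lambda$. Subadditivity then gives $c(\eta,\eta')\le k$ for all $\eta<\eta'$ in $F^{-1}\{k\}$.
\end{itemize}
This contradicts the unboundedness of $c$ without any appeal to the size of $b$ itself. Alternatively, you must verify that Lambie-Hanson's own systems have a feature of this kind.
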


 With this in mind, we can pose the general question:
\begin{quest}
	Does the strong tree property at $\ka$ imply the hypotheses of Theorem \ref{thm:LH}?
\end{quest}

Since narrow $\mc{P}_\ka(\la)$-systems do not necessarily generate thin $(\ka,\la)$-trees, this is akin to asking:

\begin{quest}
	Do all $(\ka,\la)$-trees generated from narrow systems contain a thin subtree?
\end{quest}
Note that, as in \cite{lambie-hanson:narrowsystems}, we can weaken this hypothesis slightly. We only require branches through narrow systems constructed from subadditive colorings, and can restrict to systems with width $\w$.

\section{One-Cardinal Trees and Narrow Subsystems}\label{s:1card}
In this section, we begin to examine which trees can contain narrow subsystems. As a first observation, note that not every tree admits a narrow subsystem. For instance, consider the following tree. 

\begin{defn}
	The \emph{fan of height $\ka$} is the tree consisting of $\ka$-many separate branches extending from the root node, with the $\alpha$-th branch having height $\alpha$.
\end{defn}

Clearly, a fan of height $\ka$ is a tree with height $\ka$ and levels of size $\ka$. If $\ka$ is regular, it has no thin subtree of height $\ka$. In fact, the same is true of subsystems:

\begin{prop}\label{prop:1-fan}
	If $\ka$ is regular, a fan of height $\ka$ has no narrow (or even thin) subsystem.
\end{prop}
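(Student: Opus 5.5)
Suppose toward a contradiction that $S = \langle S_\alpha \mid \alpha \in I\rangle$ is a thin subsystem of the fan $F$ of height $\ka$, with $I$ unbounded in $\ka$. The plan is to show that thinness forces $S$ to live on fewer than $\ka$ branches of the fan, and then to obtain a contradiction from regularity of $\ka$ together with the fact that each branch of the fan has height $<\ka$.

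First, note the structure of $F$. Every node other than the root lies on exactly one of the branches $b_\gamma$ ($\gamma<\ka$), and $b_\gamma$ has height $\gamma$. Two non-root nodes are $\leq_F$-comparable iff they lie on the same branch, so for $\alpha\geq 1$ every node of $F_\alpha$ belongs to a unique branch $b_\gamma$ with $\gamma>\alpha$. Since $\leq_S$ is the restriction of $\leq_F$, condition (3) of a system says the following for $\alpha<\beta$ in $I$ with $\alpha\geq 1$: there are $x\in S_\alpha$ and $y\in S_\beta$ with $x\leq_F y$, hence lying on a common branch.

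Next, fix $\alpha_0 \in I$ with $\alpha_0\geq 1$, and let $G=\{\gamma \mid b_\gamma\cap S_{\alpha_0}\neq\emptyset\}$. By thinness $|G|\leq|S_{\alpha_0}|<\ka$. Each $\gamma\in G$ is $<\ka$, so by regularity of $\ka$ the ordinal $\delta=\sup G$ is $<\ka$. Since $I$ is unbounded, pick $\beta\in I$ with $\beta\geq\delta$ and $\beta>\alpha_0$. By condition (3) there are $x\in S_{\alpha_0}$ and $y\in S_\beta$ with $x\leq_F y$. Then $x$ lies on some $b_\gamma$ with $\gamma\in G$, and $y$, being above the non-root node $x$, lies on the same branch $b_\gamma$. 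But $b_\gamma$ has height $\gamma\leq\delta\leq\beta$, so it has no node on level $\beta$, which is a contradiction. (Condition (2) is not needed.)

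The only delicate point is bookkeeping about heights. We must be sure that a branch of height $\gamma$ has no node at level $\beta\ge\gamma$, and that only nodes at level $\geq 1$ are used, so that the root does not spoil uniqueness of the branch. Choosing $\alpha_0\geq1$ handles the latter. The argument uses only thinness, $|S_{\alpha_0}|<\ka$, so it covers narrow subsystems as well.
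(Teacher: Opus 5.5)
Your argument is correct and is essentially the paper's: both fix one level $S_\alpha$ with $\alpha\geq 1$ and use thinness plus regularity of $\ka$ to see that the fewer than $\ka$ nodes there have cones bounded below $\ka$; both then contradict condition (3) at a sufficiently high level of $I$. The difference is cosmetic. The paper pigeonholes over all $\beta\in I$ above $\alpha$ to find a single $x\in S_\alpha$ with unboundedly many extensions, which is impossible for a non-root node of the fan; you instead take $\delta=\sup G$ directly and test one level $\beta\geq\delta$ (choosing $\beta>\delta$ would make the argument independent of whether a branch's height counts the root).
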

\begin{proof}
	Let $T$ be the fan of height $\ka$. Suppose that $S$ is a thin subsystem of $T$, with index set $I$. Consider $S_\alpha$ for some $\alpha \in I$ with $\alpha>0$. For all $\beta \in I$ with $\alpha < \beta$, there must be some $x_\beta \in S_\alpha$ and $y_\beta \in S_\beta$ such that $x_\beta \leq_S y_\beta$.
	
	Since $S$ is thin, $\{x_\beta \mid \beta \in I\sm\alpha\}$ has size $<\ka$. Then there is some $x\in S_\alpha$ such that $x = x_\beta$ for unboundedly many $\beta$. This means that $x$ has unboundedly many nodes above it in $S$ and thus in $T$. But no node in $T$ other than the root satisfies this property, a contradiction.
\end{proof}

The key property we used in this proof was the existence of some $\alpha$ such that every node in $S_\alpha$ had a bounded cone in $T$.
If we want to construct a similar tree without this property, we can start with a fan of height $\ka$ and, at the end of each branch, attach another fan of height $\ka$. Then every node will have an unbounded cone; but these cones (except at the root) will be either bounded or a $1$-fan. We call such a tree a $2$-fan. 

This construction can be iterated transfinitely. At successor stages $\alpha+1$, we attach a copy of the $1$-fan to the end of every branch of the $\alpha$-fan. At limit stages $\gamma$, we will start with a root node with $\gamma$ successors; at the $\alpha$-th successor, we attach an $\alpha$-fan.

For any $\alpha$, the resulting $\alpha$-fan has the property that the cone above any node (that is not the root) will either be bounded or be an $\beta$-fan for some $\beta < \alpha$.

Motivated by this construction, we give the following general definition.

\begin{defn}
	Fix $\ka$ regular, and let $T$ be a tree of height $\ka$. We define the property \emph{$\alpha$-fanlike} for cones in $T$ inductively as follows.
	\begin{itemize}
	\item A cone is $0$-fanlike if it is bounded.
	
	\item A cone $C$ above a node $x$ on level $\beta$ is $(\alpha+1)$-fanlike if there is $\delta$ with $\beta \leq \delta < \ka$ such that for each node $y \in T_\delta\cap C$, the cone above $y$ is $\alpha$-fanlike.
	
	\item If $\gamma$ is a limit ordinal, a cone $C$ above a node $x$ on level $\beta$ is $\gamma$-fanlike if there is $\delta$ with $\beta \leq \delta < \ka$ such that for all $y \in T_\delta\cap C$, the cone above $y$ is $\alpha$-fanlike for some $\alpha < \gamma$.
	\end{itemize}
\end{defn}

\begin{defn}
	We say that a tree $T$ of height $\ka$ with root $r$ is \emph{$\alpha$-fanlike} if $C_T(r)$ is $\alpha$-fanlike. That is, if $T$ is $\alpha$-fanlike when we view it as the cone above the root. $T$ is \emph{fanlike} if it is $\alpha$-fanlike for any $\alpha$.
\end{defn}

Note that this condition weakens as $\alpha$ increases: if $\alpha < \beta$, and $T$ is $\alpha$-fanlike, then $T$ is $\beta$-fanlike as well.

\begin{defn}
	Let $\ka$ regular, and let $T$ be a tree of height $\ka$. We say that $T$ is \emph{fanlike} if $T$ is $\alpha$-fanlike for some ordinal $\alpha$. The \emph{fan degree} of a fanlike tree $T$, written $\fd(T)$, is the least $\alpha$ such that $T$ is $\alpha$-fanlike.
\end{defn}

Fan degrees are closely linked with the notion of a well-pruned tree.

\begin{defn}
	A tree $T$ is well-pruned if every node extends to an element on every higher level.
\end{defn}

To establish this connection, we describe a ``pruning'' process. This process will iteratively remove nodes with bounded cones, until either the entire tree is destroyed or we arrive at a well-pruned subtree.

\begin{defn}
	Let $T$ be a tree of height $\ka$. We define the $\alpha$-th prune of $T$, written $T^\alpha$, inductively. Let $T^0 = T$. At successor stages $\alpha$, we define $B_\alpha = \{x \mid C_{T^\alpha}(x) \text{ is bounded}\}$, and let $T^{\alpha+1} = T\sm \bigcup_{x\in B_0}C_T(x)$. That is, we ``prune'' $T_\alpha$ by removing all nodes that have bounded cones. At limit stages, we define $T^\gamma = \bigcap_{\alpha<\gamma} T^\alpha$.
\end{defn}

\begin{rem}
	Note that if $T$ is well-pruned, then this process does nothing: $B_\alpha$ will be empty for all $\alpha$.
\end{rem}

If $T$ is fanlike, its fan degree measures how many iterations of the pruning process the tree will survive.

\begin{lemma}\label{lem:pruningsurvivors}
	Let $x \in T$. Then $x\in T^{\alpha}$ if and only if $C_T(x)$ is not $\beta$-fanlike (in $T$) for any $\beta < \alpha$.
\end{lemma}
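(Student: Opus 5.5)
The proof will go by induction on $\alpha$. First I fix the intended reading of the pruning definition: $B_\alpha = \{x \in T^\alpha \mid C_{T^\alpha}(x) \text{ is bounded}\}$ and $T^{\alpha+1} = T^\alpha \sm \bigcup_{x\in B_\alpha} C_{T^\alpha}(x)$. I also need the observation, proved by a side induction, that every $T^\alpha$ is downward closed in $T$. This holds because we only ever remove whole cones, and intersections of downward closed sets are downward closed. Two consequences follow.
\begin{enumerate}
\item Heights of nodes computed in $T^\alpha$ agree with heights computed in $T$.
\item A cone $C_{T^\alpha}(x)$ is bounded if and only if there is some $\delta \geq \hgt(x)$ with $C_T(x)\cap T_\delta \cap T^\alpha = \emptyset$. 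Indeed, once a level of the cone is empty, downward closure makes all higher levels empty too.
\end{enumerate}

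The central step is the following claim. Suppose the lemma holds for all $\alpha' \leq \alpha$, and let $x\in T^\alpha$. Then $C_{T^\alpha}(x)$ is bounded if and only if $C_T(x)$ is $\alpha$-fanlike. To prove it, use (2) and the inductive hypothesis applied to each node $y \in C_T(x)\cap T_\delta$: the condition $y\notin T^\alpha$ says exactly that $C_T(y)$ is $\beta$-fanlike for some $\beta<\alpha$. So boundedness of $C_{T^\alpha}(x)$ is equivalent to the existence of $\delta\geq\hgt(x)$ such that every $y\in C_T(x)\cap T_\delta$ has a $\beta$-fanlike cone for some $\beta<\alpha$. I then match this with the definition of $\alpha$-fanlike by cases.
\begin{itemize}
\item For $\alpha=0$, the condition reads ``some level of the cone is empty'', which is boundedness, i.e. $0$-fanlike.
\item For $\alpha$ a limit, the condition is literally the definition of $\alpha$-fanlike.
\item For $\alpha=\gamma+1$, the condition ``$\beta$-fanlike for some $\beta\leq\gamma$'' is equivalent to ``$\gamma$-fanlike'', by the monotonicity remark stated earlier. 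The condition then becomes the definition of $(\gamma+1)$-fanlike.
\end{itemize}
Before relying on monotonicity, I will briefly check it by induction. This is easy: a witness $\delta$ for a smaller degree also witnesses the larger degree.

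With the claim in hand, the main induction is routine.
\begin{itemize}
\item For $\alpha=0$ the statement is trivial: $x\in T^0 = T$, and there is no $\beta<0$.
\item At a successor stage, $x\in T^{\alpha+1}$ if and only if $x\in T^\alpha$ and $x$ lies in no removed cone. By downward closure, the second condition amounts to $C_{T^\alpha}(x)$ being unbounded. Suppose $x$ lay in $C_{T^\alpha}(z)$ with $C_{T^\alpha}(z)$ bounded; then $C_{T^\alpha}(x) \subseteq C_{T^\alpha}(z)$ would be bounded too, so $x$ itself would lie in $B_\alpha$. By the inductive hypothesis and the claim, this says exactly that $C_T(x)$ is not $\beta$-fanlike for any $\beta<\alpha$ and not $\alpha$-fanlike, i.e. not $\beta$-fanlike for any $\beta<\alpha+1$.
\item At a limit stage, $x\in T^\gamma=\bigcap_{\alpha<\gamma}T^\alpha$ if and only if, for every $\alpha<\gamma$, $C_T(x)$ is not $\beta$-fanlike for any $\beta<\alpha$. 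This is the same as not $\beta$-fanlike for any $\beta<\gamma$.
\end{itemize}

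I expect the main obstacle to be bookkeeping rather than any deep idea. The definition of the prune as written contains typos ($B_0$, $T$ in place of $T^\alpha$), so the first job is to fix the intended reading above. The second is to make sure the inductive hypothesis is applied to nodes $y$ above $x$ at the same stage $\alpha$, which is why the claim is stated inside the induction. The one genuinely needed fact is downward closure of each $T^\alpha$. It is what lets ``bounded'' be read off a single empty level, and it guarantees that removing cones removes nothing beyond nodes whose own cones are bounded.
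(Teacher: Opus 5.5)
Your proposal is correct and takes the same route as the paper: induction on $\alpha$, where the successor step rewrites unboundedness of $C_{T^\alpha}(x)$ as ``every level of $C_T(x)$ contains a node surviving to $T^\alpha$'' and then matches this against the definition of $\alpha$-fanlike. Your version is more careful than the paper's, since you fix the typos in the definition of the prune, verify that each $T^\alpha$ is downward closed, and make explicit the monotonicity needed when $\alpha$ is a successor, all of which the paper uses without comment.
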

\begin{proof}
	We prove this by induction on $\alpha$. 
	Our base case is $\alpha = 1.$ By definition, $x \in T^1$ if and only if $C_{T^0}(x) = C_T(x)$ is not bounded, which by definition holds if and only if $C_T(x)$ is not $0$-fanlike.
	
	Next, the successor stage. It suffices to show that $x \in T^{\alpha+1}$ if and only if $C_T(x)$ is not $\alpha$-fanlike. By definition, $x \in T^{\alpha+1}$ if and only if $C_{T^\alpha}(x)$ is unbounded. Applying our inductive hypothesis, for all $y \in T^{\alpha}$, $C_T(y)$ is not $\beta$-fanlike for any $\beta < \alpha$. We conclude that $C_{T^\alpha}(x)$ is unbounded if and only if all levels in $C_T(x)$ above the level of $x$ contain some element $y$ such that $C_T(y)$ is not $\beta$-fanlike for any $\beta < \alpha$. This occurs if and only if $C_T(x)$ is not $\alpha$-fanlike.
	
	Finally, let $\gamma$ be a limit ordinal. Then $x\in T^\gamma$ if and only if $x \in T^\alpha$ for all $\alpha < \gamma$, so by our inductive hypothesis $x\in T^\gamma$ if and only if $C_T(x)$ is not $\beta$-fanlike for any $\beta < \gamma$.
\end{proof}

\begin{cor}\label{cor:flpruning}
	$T$ is $\alpha$-fanlike if and only if $T^{\alpha+1}$ is empty.
\end{cor}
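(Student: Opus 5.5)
This follows from Lemma \ref{lem:pruningsurvivors} applied to the root $r$ of $T$, together with the fact that every prune $T^\beta$ is downward closed in $T$. Downward closure holds because each $T^{\beta+1}$ is obtained from $T^\beta$ by removing entire cones, and intersections of downward closed sets are downward closed. So a prune is empty exactly when the root has been removed, and the problem reduces to asking when $r\in T^{\alpha+1}$.

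By Lemma \ref{lem:pruningsurvivors} with $\alpha+1$ in place of $\alpha$, $r\in T^{\alpha+1}$ if and only if $C_T(r)$ is not $\beta$-fanlike for any $\beta<\alpha+1$, that is, for any $\beta\le\alpha$. Since $C_T(r)=T$, this says $T$ is not $\beta$-fanlike for any $\beta\le\alpha$. By the monotonicity remark (an $\beta$-fanlike tree is also $\alpha$-fanlike whenever $\beta\le\alpha$), the condition ``$T$ is $\beta$-fanlike for some $\beta\le\alpha$'' is equivalent to ``$T$ is $\alpha$-fanlike''. Hence $r\in T^{\alpha+1}$ if and only if $T$ is not $\alpha$-fanlike. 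Combining this with the first paragraph, $T^{\alpha+1}=\emptyset$ if and only if $r\notin T^{\alpha+1}$, which holds if and only if $T$ is $\alpha$-fanlike.

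The main point to check is the downward closure of the prunes, which is what lets the whole question be decided at the root. At a successor stage, if $y\in T^{\beta+1}$ and $x\le_T y$, then $x\notin C_T(z)$ for every removed node $z$; otherwise $z\le_T x\le_T y$ and $y$ would have been removed too. The stated definition of $T^{\alpha+1}$ contains a typo ($T$ and $B_0$ in place of $T^\alpha$ and $B_\alpha$); I read it as removing from $T^\alpha$ the cones above nodes of $B_\alpha$, consistent with how the proof of Lemma \ref{lem:pruningsurvivors} uses it. If the monotonicity remark were not to be assumed, I would verify it by induction on $\beta$, using the same witnessing level $\delta$ at each stage.
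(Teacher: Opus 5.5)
Your proof is correct and follows the paper's own argument: apply Lemma \ref{lem:pruningsurvivors} (at stage $\alpha+1$) to the root, and observe that the root can be missing from $T^{\alpha+1}$ only when $T^{\alpha+1}$ is empty. The paper uses two facts without comment, and you state and justify both: that every prune is downward closed, and that monotonicity of fanlikeness turns ``not $\beta$-fanlike for any $\beta\le\alpha$'' into ``not $\alpha$-fanlike.''
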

\begin{proof}
	Suppose $T$ is $\alpha$-fanlike. Let $x$ be the root of $T$. Then $C_T(x) = T$ is $\alpha$-fanlike, so by Lemma \ref{lem:pruningsurvivors}, $x$ is not in $T^{\alpha+1}$. Since $x$ is the root of $T$, it will only be missing from $T^{\alpha+1}$ if $T^{\alpha+1}$ is empty.
	
	Suppose $T$ is not $\alpha$-fanlike. Let $x$ be the root of $T$. Then $C_T(x) = T$ is not $\alpha$-fanlike, so $x\in T^{\alpha+1}$, and thus $T^{\alpha+1}$ is not empty.
\end{proof}

This pruning process always stabilizes in fewer than $|T|^+$ steps.

\begin{lemma}\label{lem:pruningstabilizes}
	There is an ordinal $\beta < |T|^+$ such that for all $\beta' > \beta$, $T^{\beta'} = T^\beta$.
\end{lemma}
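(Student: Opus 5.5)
The plan is to exploit the fact that the prunes form a $\subseteq$-decreasing sequence of subsets of $T$, and that once two consecutive prunes agree, the sequence is constant from then on. A strictly decreasing sequence of subsets of $T$ can have length at most $|T|^+$, so the first stabilization point must occur before $|T|^+$.

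\textbf{Step 1: the prunes are decreasing.} We show by induction that $\alpha \leq \alpha'$ implies $T^{\alpha'} \subseteq T^\alpha$. This is immediate from the construction, since each successor prune removes a set of nodes and each limit prune is an intersection. Alternatively, it follows directly from Lemma \ref{lem:pruningsurvivors}: the condition ``$C_T(x)$ is not $\beta$-fanlike for any $\beta<\alpha$'' only gets stronger as $\alpha$ grows.

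\textbf{Step 2: one repetition forces stabilization.} Suppose $T^{\beta+1}=T^\beta$. Then $T^{\beta'}=T^\beta$ for all $\beta'>\beta$, by induction on $\beta'$.
\begin{itemize}
\item At a successor stage, $T^{\beta'+1}$ is computed from $T^{\beta'}$ by the same rule that produced $T^{\beta+1}$ from $T^\beta$. Since $T^{\beta'}=T^\beta$, we get $T^{\beta'+1}=T^{\beta+1}=T^\beta$.
\item At a limit stage, $T^{\beta'}$ is the intersection of sets that all equal $T^\beta$ from stage $\beta$ onward, together with larger sets below $\beta$, so it equals $T^\beta$.
\end{itemize}
The successor case is the delicate point, because the defining formula as literally written is garbled: it refers to $B_0$ and $T$ rather than $B_\alpha$ and $T^\alpha$. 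If one does not want to rely on reading the definition as a function of $T^\alpha$, Lemma \ref{lem:pruningsurvivors} handles this more cleanly. By that lemma, $x\in T^{\beta+1}\sm T^{\beta+2}$ iff $C_T(x)$ is $(\beta+1)$-fanlike but not $\gamma$-fanlike for any $\gamma\le\beta$. One can then argue as follows. A node whose cone is first $(\beta+1)$-fanlike has, at some level $\delta$, all of its nodes with $\beta$-fanlike cones. Those nodes lie in $T^\beta\sm T^{\beta+1}$, or have cones that are fanlike of lower degree. If $T^\beta=T^{\beta+1}$, no node is first exactly $\beta$-fanlike. Then every node at level $\delta$ is $\gamma$-fanlike for some $\gamma<\beta$, which makes $x$ fanlike of degree below $\beta+1$, so $x$ is already pruned. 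This is where the main care is needed.

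\textbf{Step 3: counting.} Suppose that for every $\beta<|T|^+$ we had $T^{\beta+1}\subsetneq T^\beta$. Choose $x_\beta\in T^\beta\sm T^{\beta+1}$ for each such $\beta$. These nodes are pairwise distinct: if $\beta<\beta'$, then $x_{\beta'}\in T^{\beta'}\subseteq T^{\beta+1}$, while $x_\beta\notin T^{\beta+1}$. This gives an injection of $|T|^+$ into $T$, which is a contradiction. So some $\beta<|T|^+$ has $T^{\beta+1}=T^\beta$, and Step 2 finishes the proof.
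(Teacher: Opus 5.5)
Your proposal is correct and follows the paper's proof. The paper likewise argues that one repetition $T^\beta=T^{\beta+1}$ makes the sequence constant from then on, and that otherwise removing a node $x_\beta\in T^\beta\setminus T^{\beta+1}$ at every stage below $|T|^+$ would give $|T|^+$ distinct nodes of $T$; your explicit decreasing-ness argument and distinctness check make that informal count precise.
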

\begin{proof}
	First, note that if $T^\beta = T^{\beta+1}$, then $T^\beta$ has no elements with bounded cones, so $T^{\beta'} = T^\beta$ for all $\beta' >\beta$. That is, if the sequence is constant for even two consecutive elements, it must stabilize completely.
	
	Suppose the pruning does not stabilize at stage $\alpha$, so $T_{\alpha+1}\neq T_\alpha$. Then $T_{\alpha+1}$ is a proper subset of $T_\alpha$, and thus we have removed at least one node. If the process never stabilizes before stage $|T|^+$, then we will have removed $|T|^+$-many nodes from $T$, a contradiction.
\end{proof}

The pruning process will stabilize in one of two ways: either the tree is completely destroyed, or we hit a well-pruned subtree. This gives us the following fact:

\begin{lemma}\label{lem:FLiffWPsubtree}
	A tree $T$ has a well-pruned subtree if and only if it is not fanlike.
\end{lemma}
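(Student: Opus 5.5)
We will deduce the lemma from the pruning machinery: Corollary \ref{cor:flpruning}, which says that $T$ is $\alpha$-fanlike exactly when $T^{\alpha+1}=\emptyset$, and Lemma \ref{lem:pruningstabilizes}, which says the prunes stabilize at some $\beta<|T|^+$. Throughout, ``well-pruned subtree'' means a nonempty well-pruned subtree of height $\ka$ containing the root, closed downward in $T$. Since every node in a well-pruned tree of height $\ka$ extends to every higher level, each of its cones is unbounded.

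\textbf{Well-pruned subtree $\Rightarrow$ not fanlike.} Suppose $W\subseteq T$ is a well-pruned subtree. We show by induction on $\alpha$ that $W\subseteq T^\alpha$.
\begin{itemize}
\item The base case $\alpha=0$ is trivial.
\item At limit stages $W\subseteq T^\gamma$ because $T^\gamma$ is an intersection.
\item For the successor step, assume $W\subseteq T^\alpha$. Each $x\in W$ has $C_W(x)\subseteq C_{T^\alpha}(x)$. Since $C_W(x)$ is unbounded, so is $C_{T^\alpha}(x)$, and hence $x$ is not pruned at stage $\alpha$. Therefore $W\subseteq T^{\alpha+1}$.
\end{itemize}
(Here we read the pruning definition as removing the cones of the nodes in $B_\alpha$ from $T^\alpha$, which is clearly what is intended.) Hence $T^{\alpha+1}\neq\emptyset$ for every $\alpha$. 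By Corollary \ref{cor:flpruning}, $T$ is not $\alpha$-fanlike for any $\alpha$, so it is not fanlike.

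\textbf{Not fanlike $\Rightarrow$ well-pruned subtree.} Let $\beta$ be given by Lemma \ref{lem:pruningstabilizes}, so $T^\beta=T^{\beta+1}$. Since $T$ is not $\beta$-fanlike, Corollary \ref{cor:flpruning} gives $T^{\beta+1}\neq\emptyset$, and so $T^\beta$ is nonempty. The root lies in $T^\beta$, because each $T^\alpha$ is downward closed: we only ever remove full cones $C(x)$, and the removed set is upward closed. Since $T^\beta=T^{\beta+1}$, no node of $T^\beta$ has a bounded cone in $T^\beta$. This means every $x\in T^\beta$ has elements of $C_{T^\beta}(x)$ on every level above $\hgt(x)$. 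As $T^\beta$ is downward closed, heights computed in $T^\beta$ agree with heights in $T$. The cone of the root is unbounded, so $T^\beta$ has height $\ka$. Thus $T^\beta$ is a well-pruned subtree.

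\textbf{Main obstacle.} The step needing care is the bookkeeping that each $T^\alpha$ is a downward closed subtree, so that ``level'' and ``bounded'' mean the same in $T^\alpha$ as in $T$. We will prove this by induction alongside the first direction, noting at successor steps that we delete an upward-closed set. A second point to check is that a well-pruned subtree, in the intended sense, is itself downward closed or can be replaced by its downward closure. Taking the downward closure only adds predecessors, and their cones contain the original unbounded cones, so they remain unbounded.
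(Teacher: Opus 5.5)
Your proof is correct and follows the paper's argument. In both, the first direction shows by induction that a well-pruned subtree survives every pruning stage and then applies Corollary \ref{cor:flpruning}, and the second takes the stable prune $T^\beta$ from Lemma \ref{lem:pruningstabilizes}. Your extra bookkeeping is sound and fills in points the paper leaves implicit or states with typos: reading the pruning step as deleting the cones of $B_\alpha$ from $T^\alpha$, checking that each $T^\alpha$ is downward closed so levels and boundedness agree with $T$, and requiring the subtree to have height $\ka$.
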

\begin{proof}
	 Suppose $T$ has a well pruned subtree $T'$. Then $T' \subseteq T^\alpha$ for all $\alpha$: no nodes of $T'$ can be removed at successor stages, because all nodes in $T'$ have unbounded cones in $T'$. At limits we take intersections, so all of $T'$ will also survive limit stages. We conclude that $T^{\alpha+1}$ is nonempty for all $\alpha$, so by Corollary \ref{cor:flpruning}, $T$ cannot be $\alpha$-fanlike for any $\alpha$.
	 
	 Now suppose $T$ is not fanlike. Then it is not $\alpha$-fanlike for any $\alpha$, so by Corollary \ref{cor:flpruning}, $T^\alpha$ is nonempty for all $\alpha$. By Lemma \ref{lem:pruningstabilizes}, the $T^\alpha$'s stabilize at some stage $\beta < |T|^+$. Define $T' = T^\beta$. Since $T^\beta = T^{\beta+1}$ and $T^\beta$ is nonempty, we conclude that all nodes in $T^\beta$ have unbounded cones, and thus $T^\beta$ is a well-pruned subtree of $T$.
\end{proof}

Armed with this perspective on fan degree, we turn now to our main theorem.

\begin{theorem}\label{thm:1cardmain}
	Let $\ka$ be regular. Suppose $T$ is a tree of height $\ka$ that admits a narrow system. Then $T$ contains a well-pruned subtree.
\end{theorem}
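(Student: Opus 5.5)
By Lemma \ref{lem:FLiffWPsubtree}, it suffices to show that $T$ is not fanlike. Equivalently, by Corollary \ref{cor:flpruning} and Lemma \ref{lem:pruningsurvivors}, it suffices to show that the root survives every stage of the pruning process. I would prove something stronger: nodes of a fixed narrow subsystem $S$ (index set $I$, width $\mu$, with $\mu^+<\ka$) survive every prune $T^\xi$, in a form strong enough to carry through the induction.

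\textbf{Inductive statement.} For every ordinal $\xi$ and every finite increasing sequence $\alpha_1<\dots<\alpha_n$ in $I$, there is a $\leq_T$-chain $s_1\leq_T\dots\leq_T s_n$ with each $s_i\in S_{\alpha_i}\cap T^\xi$. Taking $n=1$, every $T^\xi$ is nonempty, so the root is in every $T^\xi$. Hence $T$ is not $\alpha$-fanlike for any $\alpha$, and Lemma \ref{lem:FLiffWPsubtree} gives the well-pruned subtree.

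\textbf{Base case.} For $\xi=0$ the statement follows from the coherence axiom of a system, applied to the pairs $\alpha_i<\alpha_n$. Condition (2) makes predecessors of a common node comparable, so the witnesses can be chosen to form a chain.

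\textbf{Successor step.} This is the pigeonhole argument from Proposition \ref{prop:1-fan}. Fix $\alpha_1<\dots<\alpha_n$. For each $\gamma\in I$ above $\alpha_n$, the hypothesis at stage $\xi$ gives a chain in $T^\xi$ through $\alpha_1,\dots,\alpha_n,\gamma$. There are at most $\mu<\ka$ possibilities for the first $n$ entries, and $\ka$ is regular, so one fixed chain $s_1,\dots,s_n$ is extended by elements of $S\cap T^\xi$ on cofinally many levels. Hence each $s_i$ has an unbounded cone in $T^\xi$, and so $s_i\in T^{\xi+1}$.

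\textbf{Limit step (main obstacle).} Let $\xi$ be a limit. If $\mathrm{cf}(\xi)>\mu$, pigeonhole over the at most $\mu$ candidate chains on the fixed levels $\alpha_1,\dots,\alpha_n$ produces one chain lying in cofinally many $T^{\xi'}$, hence in $T^\xi$. The hard case is $\mathrm{cf}(\xi)\le\mu$, where the witnesses can drift with $\xi'$. Here I would exploit that each set $S_\alpha\cap T^{\xi'}$ is a decreasing sequence of subsets of a set of size $\le\mu$. Therefore it stabilizes at some $\xi_\alpha<\mu^+$, and by Lemma \ref{lem:pruningstabilizes}-style counting, all such stabilizations occur below $\mu^+$. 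So it suffices to reach stage $\mu^+$, which has cofinality $\mu^+>\mu$ and is therefore covered by the good case. To do this, run the induction directly up to $\mu^+$ via stabilized levels, replacing the bad limits by the uniform bound.

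I expect this reorganization to be the crux. An intermediate limit of small cofinality may leave some finite level sequence temporarily without a chain, so the induction must be phrased about the stabilized sets $S_\alpha\cap T^{\xi_\alpha}$ rather than stage by stage. Concretely, I would first show that $T^{\mu^+}=T^{\mu^++1}$ on $S$. I would then apply the successor pigeonhole once at stage $\mu^+$, using the regularity of $\ka$ together with $\mu^+<\ka$.
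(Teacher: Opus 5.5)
\textbf{The inductive invariant is false, already at $\xi=0$, so the induction cannot start.}

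Coherence only gives, for each pair $\alpha_i<\alpha_n$, \emph{some} $x_i\in S_{\alpha_i}$ below \emph{some} $y_i\in S_{\alpha_n}$. Different $i$ may need different $y_i$, and condition (2) only helps once there is a common node above.

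Here is a concrete counterexample. Let $T$ consist of a cofinal branch $c=\{c_\delta\mid\delta<\ka\}$ together with a node $x$ on level $\alpha$, off $c$, whose cone is a fan: for each $\eta>\alpha$ there is a separate branch $b_\eta$ of height $\eta$ above $x$. Define the system as follows:
\begin{itemize}
\item for $\delta<\alpha$, let $S_\delta=\{c_\delta,x_\delta\}$, where $x_\delta$ is the predecessor of $x$ on level $\delta$;
\item let $S_\alpha=\{x\}$;
\item for $\delta>\alpha$, let $S_\delta=\{c_\delta,f_\delta\}$, where $f_\delta$ is the node of $b_{\delta+1}$ on level $\delta$.
\end{itemize}
Every pair of levels is witnessed, either by $c$ or by $x_\delta\le_T x\le_T f_{\delta'}$. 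So this is a subsystem of width $2$.

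It breaks both parts of your argument:
\begin{itemize}
\item For $\alpha<\beta<\beta'$ there is no chain through $S_\alpha,S_\beta,S_{\beta'}$, since $f_\beta\not\le_T f_{\beta'}$. So your base case fails.
\item Since $C_T(x)$ is $1$-fanlike, $x\in T^1\sm T^2$, so $S_\alpha\cap T^2=\emptyset$. Even the case $n=1$ of your claim fails at $\xi=2$.
\end{itemize}
It is therefore not true that every level of a narrow subsystem survives every prune, and no invariant quantifying over all levels of $S$ can work.

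Independently, your treatment of limits of cofinality $\le\mu$ is circular. That each $S_\alpha\cap T^\xi$ stabilizes before $\mu^+$ says nothing about whether the stable value is nonempty. Invoking the ``good case'' at $\mu^+$ requires the invariant at cofinally many stages below $\mu^+$, which already means passing limits such as $\w$.

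\textbf{The missing idea.} You need to obtain long chains of $S$-nodes \emph{without} prescribing their levels, by pigeonholing at a single high level, and to measure progress by fan degree rather than by survival through prunes.

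\begin{enumerate}
\item The paper assumes $T$ is fanlike (Lemma \ref{lem:FLiffWPsubtree}). Then every cone is fanlike, and above each node $x$ there is a level past which every node of $C(x)$ has strictly smaller fan degree.
\item Since $|S_\alpha|\le\mu<\ka$, you can choose increasing levels $\alpha_\beta\in I$ for $\beta<\mu^+$ such that for every $x\in S_{\alpha_\beta}$ and every $y\in C(x)$ at or above level $\alpha_{\beta+1}$, $\fd(C(y))<\fd(C(x))$.
\item Take $\gamma\in I$ above all $\alpha_\beta$, which is possible because $\mu^+<\ka$ and $\ka$ is regular.
\item Pairwise coherence between each $\alpha_\beta$ and $\gamma$, together with pigeonholing $\mu^+$ levels into the at most $\mu$ nodes of $S_\gamma$, yields one $y\in S_\gamma$ lying above $S$-nodes on $\mu^+$ many levels $\alpha_\beta$.
\item These predecessors of $y$ automatically form a chain, and their fan degrees strictly decrease, contradicting well-foundedness.
\end{enumerate}
This uses only pairwise coherence and no induction on pruning stages, so neither the base-case problem nor the limit-stage problem ever arises.
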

\begin{proof}
	Suppose $T$ does not have a well-pruned subtree, but does have a narrow subsystem $S = \langle S_\alpha \mid \alpha \in I \rangle$. Let $\theta = \width(S)$, noting that $\theta^+ < \ka$.
	
	We will prove something slightly stronger than the statement of the theorem. Given $\alpha \in I$, we say that a subset $C\subseteq T$ \emph{hits $\alpha$} if $C\cap S_\alpha$ is nonempty. We claim that there is some $\alpha < \ka$ such that for any selection $A$ of $\leq \theta$-many nodes in $T_\alpha$, the cone above $A$, written $C(A) = \bigcup_{x\in A}C(x)$, misses unboundedly many elements of $I$. In particular, this means that the coherence requirement of the system $S$ must fail: if we set $A = S_\alpha$, then there will be some $\gamma > \alpha$ such that nothing in $S_\gamma$ is above anything in $S_\alpha$.
	
	Suppose that the claim fails for $T$. This means that for all $\alpha < \ka$, there is a set $A_\alpha$  of $\leq\theta$-many nodes in $T_\alpha$ and an ordinal $\gamma_\alpha$ such that $A_\alpha$ hits all levels $\geq \gamma_\alpha$.
	
	Since $T$ has no well-pruned subtree, by Lemma \ref{lem:FLiffWPsubtree} it is $\rho$-fanlike for some ordinal $\rho$.	This means that there is some $\delta$ such that the cones above every node in $T_\delta$ are $\rho'$-fanlike for some $\rho'<\rho$. Set $\alpha_0 = \delta$.
	
	For each element $x\in A_{\alpha_0}$, since $C(x)$ is fanlike, there is some $\delta_x$ such that for all $y \in C(x)$ with $\Lev(y) \geq \delta_x$, $\fd(C(y)) < \fd(C(x))$. Let $\alpha_1 = \sup_{x\in A_{\alpha_0}}\delta_x$.
	
	We repeat this process inductively, taking supremums at limit stages, defining an increasing sequence $\langle \alpha_\beta \mid \beta < \theta^+\rangle$ such that for all $x \in A_{\alpha_\beta}$ and all $y \in C_x$ at or above level $\alpha_{\beta+1}$, $\fd(C(y))<\fd(C(x))$.
	
	\begin{claim}\label{cl:wellfounded}
		For any $x$ in any $C(A_{\alpha_\beta})$, $B_x = \{\beta' \mid x\in C(A_{\alpha_{\beta'}})\}$ is finite.
	\end{claim}
	\begin{proof}
		Suppose $\langle \beta_n \mid n<\w\rangle$ is an increasing sequence of elements in $\theta^+$, and suppose $x \in \bigcap_{n<\w}C(A_{\alpha_{\beta_n}})$. Then there is a  sequence $x_n$ such that each $x_n \in A_{\alpha_{\beta_n}}$, and $x \in C(x_n)$. Since $T$ is a tree, all nodes in this this sequence must be comparable; since their levels are increasing, the sequence is $\leq_T$-increasing.
		
		For all $n<m<\w$, since $x_n \leq_T x_m$ and $x_m$ is above level $\alpha_{\beta_n}$, by construction $\fd(C(x_m)) < \fd(C(x_n))$.
		This gives an infinite decreasing sequence of ordinals, a contradiction.
	\end{proof}
	
	Let $\gamma = \sup_{\beta < \theta^+}\gamma_\beta$. Then for all $\beta < \theta^+$, $C(A_{\alpha_\beta})\cap S_\gamma$ must be nonempty.
	For all $y \in S_\gamma$, consider the set $B_y = \{\beta <\theta^+ \mid y\in C(A_{\alpha_\beta})\}$. Clearly $\bigcup_{y\in S_\gamma} B_y = \theta^+$, since each $C(A_{\alpha_\beta})$ intersects $S_\gamma$. Since $S_\gamma$ has size at most $\theta$, there is some $y \in S_\gamma$ such that $B_y$ has size $\theta^+$.
	This means that $y$ is in $C(A_{\alpha_\beta})$ for infinitely many $\beta$, contradicting Claim \ref{cl:wellfounded}.
\end{proof}

	To put this theorem into context, we first note several cases where it is trivial. If $T$ has a cofinal branch, that branch is itself both a well-pruned subtree of $T$ and a narrow subsystem. Moreover, it is a well-known fact that any thin tree of height $\ka$ has a well-pruned subtree.
	
	The trees for which this theorem is nontrivial are trees of height $\ka$ with levels of size $\geq \ka$ and no cofinal branch. These are often called \emph{wide Aronszajn trees}, although that terminology sometimes refers to trees with width exactly $\ka$. These trees are of interest in several areas of logic; in particular, they are used as game clocks for Ehrenfeucht–Fra\"iss\'e games of uncountable length. For more about wide Aronszajn trees, see \cite{dzamonashelah:wideAtrees} and \cite{kivimaki:universalwideAtrees}.

\section{Some Well-Pruned Wide Aronszajn Trees}
Unlike normal $\ka$-Aronszajn trees, wide $\ka$-Aronszajn trees always exist for any $\ka$. We are interested in the behavior of these trees even when the tree property holds, so we are particularly interested in wide Aronszajn trees that have no thin subtree of height $\ka$. We record two explicit examples of such trees.

\begin{exmp}[The Infinite Fan]
	Recall that when we defined the $\w$-fan, our procedure at limits was aimed at preserving the properties of the $n$-fans. But we could have iterated the construction in a different way. That is, if we view each $n+1$-fan $F_{n+1}$ as extending a $n$-fan $F_n$ by adding a $1$-fan to the end of each branch, we can define a tree $F = \bigcup_{n<\w} F_n$. (In fact, we can continue this process transfinitely, as long as we stop at a limit stage below stage $\ka$; for simplicity, we will stop at stage $\w$.) This should be thought of as iterating the process of adding a $1$-fan to the end of each branch $\w$-many times.

	The tree $F$ has some interesting properties. First, unlike the fanlike trees described in the previous section, it is well-pruned. It is also highly homogeneous: for any node $x$, $C(x)$ is isomorphic to the full tree. It has no cofinal branches, since each branch consists of concatenating countably many segments each of which has length $<\ka$. Moreover this remains true in any extension in which $\ka$ remains a regular cardinal. Finally, we note that $F$ contains no thin subtree of height $\ka$.
\end{exmp}

\begin{exmp}[Specker's Tree]
	A well-known theorem due to Specker \cite{specker}, generalizing the standard construction of an special Aronszajn tree, states that if $\ka^{<\ka} = \ka$, then there is a $\ka^+$-Aronszajn tree. A proof of his theorem using walks on ordinals appears in \cite{kanamori}. The cardinal arithmetic assumption is only used to show that the levels are thin; if it fails, the tree can still be constructed, and is a wide $\ka^+$-Aronszajn tree that is well-pruned. In particular, if the tree property holds at $\ka^+$, then the tree built in this manner will be wide.
\end{exmp}

These trees are branchless well-pruned trees of height $\ka$ that are definable in any model of ZFC. Moreover, their branchlessness is upwards absolute to models that don't collapse or singularize $\ka$. In particular, if the Narrow System Property holds at $\ka$, then these trees cannot admit a narrow system: such a system would necessarily have a cofinal branch, which would induce a cofinal branch in the tree.

It follows that ZFC cannot prove that well-pruned wide Aronszajn trees on any cardinal admit a narrow system, since in models where the Narrow System Property holds, these trees are counterexamples. On the other hand, it is unknown to the author whether it is consistent for either of these trees to admit a narrow system.

\section{Two-Cardinal Trees and Systems}

In this section, we generalize the definitions and results of Section \ref{s:1card} to two-cardinal trees.
\begin{defn}
	A $(\ka,\la)$-tree $T$ is \emph{well-pruned} if for every element $x \in T_z$ and all $z' \supseteq z$, there is $y\in T_{z'}$ such that $y\cap z = x$. That is, every node in $T$ has an extension to all larger levels.
\end{defn}
\begin{defn}
	Let $T$ be a $(\ka,\la)$-tree, and let $C$ be a cone in $T$. We define the notion of \emph{$\alpha$-fanlike} inductively:
	\begin{itemize}
	\item A cone $C$ in $T$ is $0$-fanlike if it is bounded.
	
	\item A cone $C$ above a node $x \in T_{z_0}$ is $\alpha+1$-fanlike if there is $z\in \mc{P}_\ka(\la)$ with $z\supseteq z_0$ such that for each $y\in C\cap T_z$, the cone above $y$ is either bounded in $\mc{P}_\ka(\la)$ or is $\alpha$-fanlike.
	
	\item If $\gamma$ is a limit ordinal, a cone $C$ above a node $x \in T_{z_0}$ is $\gamma$-fanlike if there is $z \in \mc{P}_\ka(\la)$ with $z \supseteq z_0$ such that for all $y \in C\cap T_z$, the cone above $y$ is $\alpha$-fanlike for some $\alpha < \gamma$.
\end{itemize}
	
	We say that a $(\ka,\la)$-tree $T$ is $\alpha$-fanlike if the above definition holds, replacing $C$ with $T$.
\end{defn}

\begin{defn}
	Let $\ka$ be regular, and let $T$ be a $(\ka,\la)$-tree. We say that $T$ is \emph{fanlike} if $T$ is $\alpha$-fanlike for some $\alpha$. The \emph{fan degree} of $T$ is the least such $\alpha$.
\end{defn}

\begin{lemma}\label{lem:2cardfaniffwp}
	Let $\ka$ be regular, and let $T$ be a $(\ka,\la)$-tree. Then $T$ has a well-pruned $(\ka,\la)$-subtree if and only if $T$ is not fanlike.
\end{lemma}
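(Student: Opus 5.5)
I will mirror the one-cardinal argument: define a two-cardinal pruning process and show that it computes fan degree, then use stabilization. Set $T^0 = T$. At successor stages, $T^{\alpha+1}$ is obtained from $T^\alpha$ by removing every node $x\in T^\alpha_{z_0}$ whose cone in $T^\alpha$ is bounded, together with its cone. Here ``bounded'' means there is some $z'\supseteq z_0$ in $\mc{P}_\ka(\la)$ with $C_{T^\alpha}(x)\cap T^\alpha_{z'}=\emptyset$. At limits we take intersections. Each $T^\alpha$ is closed upward under $\leq_T$-cones of survivors and downward under restriction: if $y\in T^\alpha_{z'}$ survives, then $y\cap z$ has an unbounded cone containing that of $y$. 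So whenever $T^\alpha$ has nonempty levels, it is again a $(\ka,\la)$-tree-like object. I will check that nonemptiness of the root level is equivalent to all levels being nonempty. This holds because a surviving node at the root $\emptyset$ has an unbounded cone, hence extensions on every level.

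The key step is the analogue of Lemma \ref{lem:pruningsurvivors}: $x\in T^\alpha$ iff $C_T(x)$ is not $\beta$-fanlike for any $\beta<\alpha$. I will prove this by induction on $\alpha$. The base case is the definition of $0$-fanlike. For the successor step, $x\in T^{\alpha+1}$ iff for every $z\supseteq z_0$ some $y\in C_T(x)\cap T_z$ lies in $T^\alpha$. By induction this says that $C_T(y)$ is not $\beta$-fanlike for any $\beta<\alpha$, and negating gives exactly the failure of $\alpha$-fanlikeness. The successor clause of the two-cardinal definition includes the disjunct ``bounded or $\alpha$-fanlike''. I will observe that bounded cones are $0$-fanlike and that fanlikeness is monotone in the ordinal, so this disjunct is harmless. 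Proving that monotonicity is a small side induction. Limits are immediate. As in Corollary \ref{cor:flpruning}, it follows that $T$ is $\alpha$-fanlike iff the root (the unique node at level $\emptyset$) is missing from $T^{\alpha+1}$, i.e.\ iff $T^{\alpha+1}$ is empty.

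Stabilization goes exactly as in Lemma \ref{lem:pruningstabilizes}. If $T^\beta=T^{\beta+1}$, the process is constant from then on. Otherwise each step removes a node, so stabilization occurs before $|T|^+$. If $T$ is not fanlike, the stable stage $T^\beta$ is nonempty and has no node with a bounded cone. Hence every $x\in T^\beta_z$ has, for each $z'\supseteq z$, some $y\in T^\beta_{z'}$ with $y\cap z=x$. Together with the closure observation above, $T^\beta$ is a well-pruned $(\ka,\la)$-subtree. Conversely, if $T'\subseteq T$ is a well-pruned subtree, then by induction $T'\subseteq T^\alpha$ for all $\alpha$, since no node of $T'$ ever has a bounded cone in $T^\alpha\supseteq T'$. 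So the root survives forever and $T$ is not $\alpha$-fanlike for any $\alpha$.

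I expect the main obstacle to be verifying that the pruned object $T^\beta$ really is a $(\ka,\la)$-tree. In particular every level $T^\beta_z$ must be nonempty and closed under restriction to smaller $z$. This is where unboundedness in the two-cardinal sense must be used: every survivor has extensions on all superset levels. Combined with the fact that $\mc{P}_\ka(\la)$ is directed, and that restrictions of survivors survive, this gives both properties.
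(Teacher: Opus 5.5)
Your proposal is correct and follows the paper's approach. The paper proves this lemma by asserting that the one-cardinal pruning machinery (Lemma~\ref{lem:pruningsurvivors}, Corollary~\ref{cor:flpruning}, Lemma~\ref{lem:pruningstabilizes} and Lemma~\ref{lem:FLiffWPsubtree}) transfers verbatim. You carry out that transfer and make explicit the downward closure of each $T^\alpha$, the use of directedness of $\mc{P}_\ka(\la)$, and the monotonicity that absorbs the ``bounded or'' clause.

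One small correction: your remark that a nonempty $T^\alpha$ automatically has all levels nonempty is false at intermediate stages. A root surviving into $T^{\alpha+1}$ is only guaranteed an unbounded cone in $T^\alpha$; for the fan, $T^1$ is just the root. You only use the claim at the stable stage $T^\beta=T^{\beta+1}$, where your justification is valid, so nothing in the proof breaks.
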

\begin{proof}
	Identical to the proof of Lemma \ref{lem:FLiffWPsubtree}, noting that all lemmas used in that proof also generalize to $(\ka,\la)$-trees with identical proofs.
\end{proof}

\begin{thm}
	Let $\ka$ be regular. Suppose $T$ is a $(\ka,\la)$-tree that admits a narrow system. Then $T$ contains a well-pruned $(\ka,\la)$-subtree.
\end{thm}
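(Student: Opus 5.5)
We follow the proof of Theorem \ref{thm:1cardmain}, replacing ordinal levels by elements of $\mc{P}_\ka(\la)$ and suprema by unions. Suppose towards a contradiction that $T$ has a narrow subsystem $S = \langle S_z \mid z\in I\rangle$ (with $S_z\subseteq T_z$ and the order inherited from $T$), but no well-pruned $(\ka,\la)$-subtree. Let $\theta = \width(S)$, so $\theta^+<\ka$. By Lemma \ref{lem:2cardfaniffwp}, $T$ is $\rho$-fanlike for some $\rho$, so every node has a well-defined fan degree. For a set $A$ of nodes on one level, write $C(A)=\bigcup_{x\in A}C(x)$. We will derive a contradiction from the coherence of $S$, applied to the sets $A_z = S_z$ for $z\in I$; each has size $\le\theta$ and, by coherence, $C(S_z)$ meets $S_{z'}$ for every $z'\supseteq z$ in $I$.

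First we unfold fanlikeness into a decreasing-degree statement. If $x\in T_{z_0}$ has $C(x)$ of fan degree $\alpha>0$, the definition gives some $z\supseteq z_0$ such that every $y\in C(x)\cap T_z$ has a cone that is bounded or of fan degree $<\alpha$. We need this for all nodes of $C(x)$ on all levels $w\supseteq z$, not just on level $z$. If $w\supseteq z$ and $y'\in C(x)\cap T_w$, then $y'\cap z\in C(x)\cap T_z$ and $C(y')\subseteq C(y'\cap z)$. Fan degree is monotone under passing to subcones, which is a quick induction on the definition. So $\fd(C(y'))\le\fd(C(y'\cap z))<\alpha$. Degree-$0$ (bounded) cones need separate handling: a bounded cone above $x\in T_{z_0}$ means there is $z\supseteq z_0$ with $C(x)\cap T_z=\emptyset$. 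Taking unions of fewer than $\ka$ such witnesses stays inside $\mc{P}_\ka(\la)$ by regularity of $\ka$. Call the witness $z=\delta_x$.

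Next we build an increasing chain $\langle z_\beta \mid \beta<\theta^+\rangle$ in $I$. Start with any $z_0\in I$. Given $z_\beta$, let $z_{\beta+1}\in I$ contain $z_\beta\cup\bigcup_{x\in S_{z_\beta}}\delta_x$; this uses $\theta<\ka$ and the cofinality of $I$. At limits $\beta$, take $z_\beta\in I$ containing $\bigcup_{\beta'<\beta}z_{\beta'}$, which is possible since $\theta^+<\ka$. The analogue of Claim \ref{cl:wellfounded} then says that for any node $y$, the set $\{\beta \mid y\in C(S_{z_\beta})\}$ is finite. The argument is the same as before. If $y$ lies above $x_n\in S_{z_{\beta_n}}$ for $\beta_0<\beta_1<\cdots$, the $x_n$ are all $\le_T y$, so $x_n = y\cap z_{\beta_n}$ and $x_n\le_T x_m$ for $n<m$. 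Since $z_{\beta_m}\supseteq\delta_{x_n}$, we get $\fd(C(x_m))<\fd(C(x_n))$. This gives an infinite descending sequence of ordinals, a contradiction. If some $C(x_n)$ is bounded, it contains nothing on level $\delta_{x_n}$, which is also impossible.

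Finally, choose $z^*\in I$ containing $\bigcup_{\beta<\theta^+}z_\beta$. By coherence, for each $\beta$ there is $y\in S_{z^*}$ with $y\cap z_\beta\in S_{z_\beta}$, i.e. $y\in C(S_{z_\beta})$. Since $|S_{z^*}|\le\theta$, pigeonhole gives a single $y$ lying in $C(S_{z_\beta})$ for $\theta^+$ many $\beta$. This contradicts the finiteness claim.

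The main obstacle is the first step: extracting from the two-cardinal definition of $\alpha$-fanlike a single level $\delta_x$ beyond which all extensions have strictly smaller degree (or have died out). This needs the monotonicity of fan degree under subcones and careful treatment of bounded cones. Everything else transfers directly from the one-cardinal proof. It also helps that in the two-cardinal setting comparability along a branch is automatic, because $x_n = y\cap z_{\beta_n}$.
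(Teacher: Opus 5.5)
Your proposal is correct and follows essentially the same route as the paper: a $\subseteq$-increasing chain of length $\theta^+$ built from the fan-degree-drop witnesses $\delta_x$, a finiteness claim obtained from an infinite descending sequence of fan degrees, and a pigeonhole argument at a level of $I$ above the chain. You simply take $A_z = S_z$ directly, and you are more careful than the paper in two places: you keep the chain inside $I$, and you use monotonicity of fan degree to show the degree drop persists on all levels $\supseteq \delta_x$. One edge case is shared with the paper: if $\width(S)$ is finite then $\theta^+$ is finite and the final pigeonhole does not give infinitely many $\beta$, so in that case run the chain for $\omega$ steps instead, which is still possible because $\kappa$ is regular and uncountable.
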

\begin{proof}
	Suppose $T$ does not have a well-pruned subtree, but contains a narrow subsystem $S = \langle S_z \mid z\in I$. Let $\theta = \width(S)$, with $\theta^+<\ka$. 
	
	For $z\in I$, we say that a subset $C\subseteq T$ \emph{hits $z$} if $C\cap S_z$ is nonempty, and \emph{misses $z$} if the intersection is empty. As in the proof of Theorem \ref{thm:1cardmain}, we will prove that there is some $z\in I$ such that for any selection $A$ of $\leq \theta$-many nodes in $T_z$, the union of the cones above elements of $A$ misses $\subseteq$-unboundedly many elements of $I$. As before, this will imply that $S$ cannot be a system.
		
	Suppose that this is not the case. Then for all $z\in I$, there is a set $A_z$ of $\leq \theta$-many nodes in $T_z$ and a set $g_z \in \mc{P}_\ka(\la)$ such that $A_z$ hits $z'$ for all $z' \supseteq g_z$.
	
	Since $T$ has no well-pruned subtree, by Lemma \ref{lem:2cardfaniffwp} it is fanlike. Then there is $b\in \mc{P}_\ka(\la)$ such that for all $x \in T_b$, $\fd(C(x)) < \fd(T)$. Set $a_0 = b$.
	
	For each element $x\in A_{a_0}$, since $C(x)$ is fanlike, there is $b_x$ such that for all $y \in C(x)\cap T_{b_x}$, $\fd(y) < \fd(x)$. Let $a_1 = \bigcup_{x\in A_{\alpha_0}} b_x$. Note that $A_{\alpha_0}$ has size $\leq \theta$, so $a_1 \in \mc{P}_\ka(\la)$.
		
	We repeat this process $\theta^+$-many times, taking unions at limit stages, to define a $\subseteq$-increasing sequence $\langle a_\beta \mid \beta < \theta^+\rangle$ such that for all $x\in A_{a_\beta}$, $z\supseteq a_{\beta+1}$, and all $y\in C(x) \cap T_z$, $\fd(C(y)) < \fd(C(x))$.
	
	\begin{claim}
		For any $x$ in any $C(A_{a_\beta})$, $x$ appears in only finitely many cones $C(A_{a_{\beta'}})$.
	\end{claim}
	\begin{proof}
		Suppose that $\langle \beta_n \mid n<\w\rangle$ is an increasing sequence of ordinals less than $\theta^+$ such that $x \in \bigcap_{n<\w} C(A_{a_{\beta_n}})$. Then for all $n<\w$, there is some $x_n$ such that $x_n \in A_{a_{\beta_n}}$ and $x\in C(x_n)$. The sequence $\langle x_n \mid n<\w\rangle$ must be $\subseteq$-increasing, so in particular given $n<m$, $x_m \in C(x_m)$, above level $\beta_n$. We conclude that $\fd(x_m) < \fd(x_n)$ whenever $n<m$. This gives an infinite decreasing sequence of ordinals, a contradiction.
	\end{proof}
	
	Now let $g = \bigcup_{\beta < \theta^+} g_{a_\beta}$. Noting that $\theta^+<\ka$, $g \in \mc{P}_\ka(\la)$. Then for all $\beta < \theta^+$, $C(A_{a_\beta})\cap S_{g}$ is nonempty.
	For all $y \in S_{g}$, consider the set $B_y = \{\beta < \theta^+ \mid y\in C(A_{a_\beta})\}$. Since each $C(A_{a_\beta})$ hits $g$, $\bigcup_{\gamma\in S_\gamma}B_y  = \theta^+$. Since $S_{g}$ has size at most $\theta$, there is some $y\in S_\gamma$ such that $B_y$ has size $\theta^+$. This means that $y\in C(A_{a_\beta})$ for infinitely many $\beta$, a contradiction.
	
\end{proof}

\bibliography{bib}

@article {MS_TPSuccSing,
    AUTHOR = {Magidor, Menachem and Shelah, Saharon},
     TITLE = {The tree property at successors of singular cardinals},
   JOURNAL = {Arch. Math. Logic},
  FJOURNAL = {Archive for Mathematical Logic},
    VOLUME = {35},
      YEAR = {1996},
    NUMBER = {5-6},
     PAGES = {385--404},
      ISSN = {0933-5846},
   MRCLASS = {03E05 (03E35 04A20)},
  MRNUMBER = {1420265},
MRREVIEWER = {Marion Scheepers},
       DOI = {10.1007/s001530050052},
       URL = {https://doi-org.proxy.cc.uic.edu/10.1007/s001530050052},
}

@article {jech:combinatorialprobs,
    AUTHOR = {Jech, Thomas J.},
     TITLE = {Some combinatorial problems concerning uncountable cardinals},
   JOURNAL = {Ann. Math. Logic},
  FJOURNAL = {Annals of Mathematical Logic},
    VOLUME = {5},
      YEAR = {1972/73},
     PAGES = {165--198},
      ISSN = {0003-4843},
   MRCLASS = {04A20 (02K35)},
  MRNUMBER = {325397},
MRREVIEWER = {Paul E. Howard},
       DOI = {10.1016/0003-4843(73)90014-4},
       URL = {https://doi-org.proxy.cc.uic.edu/10.1016/0003-4843(73)90014-4},
}

@article {specker,
    AUTHOR = {Specker, E.},
     TITLE = {Sur un probl\`eme de {S}ikorski},
   JOURNAL = {Colloq. Math.},
  FJOURNAL = {Colloquium Mathematicum},
    VOLUME = {2},
      YEAR = {1949},
     PAGES = {9--12},
      ISSN = {0010-1354,1730-6302},
   MRCLASS = {27.2X},
  MRNUMBER = {39779},
MRREVIEWER = {F.\ Bagemihl},
       DOI = {10.4064/cm-2-1-9-12},
       URL = {https://doi.org/10.4064/cm-2-1-9-12},
}

@article {lambie-hanson:narrowsystems,
    AUTHOR = {Lambie-Hanson, Chris},
     TITLE = {Narrow systems revisited},
   JOURNAL = {Bull. Lond. Math. Soc.},
  FJOURNAL = {Bulletin of the London Mathematical Society},
    VOLUME = {56},
      YEAR = {2024},
    NUMBER = {6},
     PAGES = {1967--1987},
      ISSN = {0024-6093,1469-2120},
   MRCLASS = {03E55 (03E04 03E05 03E35)},
  MRNUMBER = {4761131},
MRREVIEWER = {Radek\ Honz\'ik},
       DOI = {10.1112/blms.13037},
       URL = {https://doi.org/10.1112/blms.13037},
}

@inproceedings {solovay:sch,
    AUTHOR = {Solovay, Robert M.},
     TITLE = {Strongly compact cardinals and the {GCH}},
 BOOKTITLE = {Proceedings of the {T}arski {S}ymposium ({P}roc. {S}ympos.
              {P}ure {M}ath., {V}ol. {XXV}, {U}niv. {C}alifornia,
              {B}erkeley, {C}alif., 1971)},
    SERIES = {Proc. Sympos. Pure Math.},
    VOLUME = {Vol. XXV},
     PAGES = {365--372},
 PUBLISHER = {Published for the Association for Symbolic Logic by the American Mathematical Society, Providence, RI},
      YEAR = {1974},
   MRCLASS = {02K35 (02H13 02K25)},
  MRNUMBER = {379200},
MRREVIEWER = {F.\ R.\ Drake},
}

@misc{kivimaki:universalwideAtrees,
	title={Universal Wide Aronszajn tree}, 
	author={Siiri Kivimäki},
	year={2025},
	eprint={2511.06526},
	archivePrefix={arXiv},
	primaryClass={math.LO},
	url={https://arxiv.org/abs/2511.06526}, 
}

@article{dzamonashelah:wideAtrees,
	 title={On Wide Aronszajn Trees in the Presence of MA}, volume={86},
	 DOI={10.1017/jsl.2020.42}, 
	 number={1}, 
	 journal={The Journal of Symbolic Logic}, author={D\v{z}amonja, Mirna and Shelah, Saharon}, year={2021}, 
	 pages={210–223}
	 }

@book {kanamori,
	AUTHOR = {Kanamori, Akihiro},
	TITLE = {The higher infinite},
	SERIES = {Springer Monographs in Mathematics},
	EDITION = {Second},
	NOTE = {Large cardinals in set theory from their beginnings,
	Paperback reprint of the 2003 edition},
	PUBLISHER = {Springer-Verlag, Berlin},
	YEAR = {2009},
	PAGES = {xxii+536},
	ISBN = {978-3-540-88866-6},
	MRCLASS = {03E55 (01A55 01A60 03-02 03-03)},
	MRNUMBER = {2731169},
}

@article {lambie-hanson:squares,
	AUTHOR = {Lambie-Hanson, Chris},
	TITLE = {Squares and narrow systems},
	JOURNAL = {J. Symb. Log.},
	FJOURNAL = {The Journal of Symbolic Logic},
	VOLUME = {82},
	YEAR = {2017},
	NUMBER = {3},
	PAGES = {834--859},
	ISSN = {0022-4812,1943-5886},
	MRCLASS = {03E35 (03E05 03E55)},
	MRNUMBER = {3694332},
	MRREVIEWER = {Yair\ Hayut},
	DOI = {10.1017/jsl.2017.38},
	URL = {https://doi.org/10.1017/jsl.2017.38},
}

@article {viale:PFA,
	AUTHOR = {Viale, Matteo},
	TITLE = {The proper forcing axiom and the singular cardinal hypothesis},
	JOURNAL = {J. Symbolic Logic},
	FJOURNAL = {The Journal of Symbolic Logic},
	VOLUME = {71},
	YEAR = {2006},
	NUMBER = {2},
	PAGES = {473--479},
	ISSN = {0022-4812,1943-5886},
	MRCLASS = {03E05 (03E10 03E65 03E75)},
	MRNUMBER = {2225888},
	MRREVIEWER = {Justin\ Tatch\ Moore},
	DOI = {10.2178/jsl/1146620153},
	URL = {https://doi.org/10.2178/jsl/1146620153},
}

@article {viale:guessing,
	AUTHOR = {Viale, Matteo},
	TITLE = {Guessing models and generalized {L}aver diamond},
	JOURNAL = {Ann. Pure Appl. Logic},
	FJOURNAL = {Annals of Pure and Applied Logic},
	VOLUME = {163},
	YEAR = {2012},
	NUMBER = {11},
	PAGES = {1660--1678},
	ISSN = {0168-0072,1873-2461},
	MRCLASS = {03E55 (03E05 03E35 03E65)},
	MRNUMBER = {2959666},
	MRREVIEWER = {A.\ Kanamori},
	DOI = {10.1016/j.apal.2011.12.015},
	URL = {https://doi.org/10.1016/j.apal.2011.12.015},
}

@article {krueger:guessing,
	AUTHOR = {Krueger, John},
	TITLE = {Guessing models imply the singular cardinal hypothesis},
	JOURNAL = {Proc. Amer. Math. Soc.},
	FJOURNAL = {Proceedings of the American Mathematical Society},
	VOLUME = {147},
	YEAR = {2019},
	NUMBER = {12},
	PAGES = {5427--5434},
	ISSN = {0002-9939,1088-6826},
	MRCLASS = {03E05 (03E40)},
	MRNUMBER = {4021101},
	MRREVIEWER = {Peter\ Holy},
	DOI = {10.1090/proc/14739},
	URL = {https://doi.org/10.1090/proc/14739},
}
\bibliographystyle{plain}

\end{document}